\documentclass[reqno, oneside, 12pt]{amsart}

\usepackage[letterpaper]{geometry}
\geometry{tmargin=1in,bmargin=1in,lmargin=1in,rmargin=1in}
\usepackage{array}

\usepackage{enumerate, hyperref,url,amssymb,amsmath,amsthm,amsxtra,mathtools,mathrsfs,calc,nccmath,color}

\usepackage{calc}
\usepackage{graphicx}
\usepackage{caption}
\usepackage{subcaption}


\newcommand{\Z}{\mathbb{Z}}
\newcommand{\Q}{\mathbb{Q}}

\newcommand{\C}{\mathbb{C}}

\let\temp\phi
\let\phi\varphi
\let\varphi\temp

\renewcommand{\(}{\left(}
\renewcommand{\)}{\right)}

\newcommand{\GL}{\operatorname{GL}}

\renewcommand{\sl}{\big|}
\newcommand{\sk}{\big|_k }

\renewcommand{\bar}[1]{\overline{#1}}



\newtheorem{theorem}{Theorem}[section]
\newtheorem{lemma}[theorem]{Lemma}

\newtheorem{proposition}[theorem]{Proposition}

\theoremstyle{remark}

\numberwithin{equation}{section}


\begin{document}


\title{Weight $2$ CM newforms as p-adic limits}

\date{\today}
\author{Robert Dicks}
\address{Department of Mathematics\\
University of Illinois\\
Urbana, IL 61801} 
\email{rdicks2@illinois.edu}

 
\begin{abstract}
Previous works have shown that certain weight $2$ newforms are $p$-adic limits of weakly holomorphic modular forms under repeated application of the $U$-operator. The proofs of these theorems originally relied on the theory of harmonic Maass forms. Ahlgren and Samart obtained strengthened versions of these results using the theory of holomorphic modular forms. Here, we use such techniques to express all weight $2$ CM newforms which are eta quotients as $p$-adic limits. In particular, we show that these forms are $p$-adic limits of the derivatives of the Weierstrass mock modular forms associated to their elliptic curves.
\end{abstract}

 \maketitle

 
 \section{Introduction}
Suppose that $E$ is an elliptic curve over $\Q$ with Weierstrass model
\[
E: y^{2}+a_{1}xy+a_{3}y=x^{3}+a_{2}x^{2}+a_{4}x+a_{6}.
\]
Let $N_{E}$ be its conductor. By the modularity of elliptic curves over $\Q$, there exists a lattice $\Lambda_{E}$ and a modular parametrization

\[
\Phi_{E}:X_{0}(N_{E}) \rightarrow \C/\Lambda_{E} \cong E.
\]

Martin and Ono \cite{Martin-Ono} proved that there are five such curves with complex multiplication whose associated newform, $g_{N_{E}}(z)$, is an eta-quotient. These forms lie in $S_{2}(N_{E})$, the space of weight $2$ cusp forms on $\Gamma_0(N_{E})$. The following table lists these curves.
 \begin{table}[h!]
\centering
 \begin{tabular}{  |c |c |c c c c c c |c|  }
 \hline
 $N_{E}$ & $g_{N_{E}}(z)$ & $a_{1}$ & $a_{2}$ & $a_{3}$ & $a_{4}$ & $a_{6}$ & CM field\\ 
\hline
 27 &  $\eta^{2}(3z)\eta^{2}(9z)$ & 0 & 0&1&0&-7&$\Q(\sqrt{-3})$\\ 
 
 32 & $\eta^{2}(4z)\eta^{2}(8z)$ & 0 & 0&0&4&0&$\Q(i)$\\
 
 36 & $\eta^{4}(6z)$ & 0 & 0&0&0&1&$\Q(\sqrt{-3}$)\\
 
 64 & $\frac{\eta^{8}(8z)}{\eta^{2}(4z)\eta^{2}(16z)}$ &0&0&0&-4&0&$\Q(i)$\\
 
 144 & $\frac{\eta^{12}(12z)}{\eta^{4}(6z)\eta^{4}(24z)}$ & 0 & 0&0&0&-1&$\Q(\sqrt{-3})$\\  
 \hline
\end{tabular}
\caption{The five curves}
\end{table}

For an elliptic curve $E$ over $\Q$, Alfes, Griffin, Ono, and Rolen \cite{Weierstrass} constructed harmonic Maass forms using the Weierstrass $\zeta$-function which comes from its modular parametrization. These forms encode arithmetic information about its Hasse-Weil $L$-function. The holomorphic parts of these forms are called \textit{Weierstrass mock modular forms}.
For each of the above curves, Clemm \cite{Clemm} showed that the derivative of the associated Weierstrass mock modular form is an eta-quotient or a twist of one. 
The following table lists these forms (where $\chi_{8}$ and $\chi_{12}$ are the Kronecker symbols with discriminants $8$ and $12$). We have normalized them to have leading coefficient $1$.
\begin{center}
 \begin{tabular}{  |c|             |c|  } 
 \hline
 $N_{E}$ & \    \ $G_{N_{E}}(z)$\\ 
\hline 

 27 &          \     \  $\frac{\eta(3z)\eta^{6}(9z)}{\eta^{3}(27z)}$ \\ 
 
 32 &          \        \  $\frac{\eta^{2}(4z)\eta^{6}(16z)}{\eta^{4}(32z)}$ \\
 
 36 &          \        \ $\frac{\eta^{3}(6z)\eta(12z)\eta^{3}(18z)}{\eta^{3}(36z)}$ \\
 
 64 &          \        \ $G_{32} \otimes \chi_{8}$ \\
 
 144 &         \       \ $G_{36} \otimes \chi_{12}$\\  
 \hline
\end{tabular}{}
\end{center}
Write $g_{N_{E}}=\displaystyle \sum a_{N_{E}}(n)q^{n}$ and $G_{N_{E}}=\displaystyle \sum C_{N_{E}}(n)q^{n}$. Using the theory of harmonic Maass forms, Clemm showed that if $p$ is a prime which is inert in the CM field of $E$ and $p \nmid C_{N_{E}}(p)$, then as a $p$-adic limit, we have
\begin{equation}\label{Clemm}
\lim_{m \rightarrow \infty}\frac{G_{N_{E}} \sl U(p^{2m+1})}{C_{N_{E}}(p^{2m+1})}=g_{N_{E}}.
\end{equation}
(She omits the hypothesis that $p \nmid C_{N_{E}}(p)$ in \cite{Clemm}).
In the case where $N_{E}=32$, the cusp form $g_{32}$ gives the Hasse-Weil $L$-function of the congruent number elliptic curve
\[
y^{2}=x^{3}-x.
\]
 For primes $p$ which are inert in $\Q(i)$ with $p \nmid C_{32}(p)$, El-Guindy and Ono \cite{El-Guindy-Ono} showed \eqref{Clemm} for $N_{E}=32$ using the theory of harmonic Maass forms. 
Ahlgren and Samart strengthened this result using the theory of holomorphic modular forms. 
Let $v_{p}(\cdot)$ denote the $p$-adic valuation on $\Z[[q]]$. 
If $p$ is a prime which is inert in $\Q(i)$, they show that for all integers $m \geq 0$, we have

 \[
v_{p}(C_{32}(p^{2m+1}))=m,
\]

 \[
v_{p}\(  \frac{G_{32} \sl U(p^{2m+1})}{C_{32}(p^{2m+1})}-g_{32}  \) \geq m+1.
\]
In this paper, we prove the following theorem.

\begin{theorem}\label{thm:main}
 Let $E$ be one of the elliptic curves in Table $1$. Write $g_{N_{E}}=\displaystyle \sum a_{N_{E}}(n)q^{n}$ and  $G_{N_{E}}=\displaystyle \sum C_{N_{E}}(n)q^{n}$. Let $p$ be a prime which is inert in the CM field of $E$.
  Then for all integers $m \geq 0$, we have
 \begin{equation}\label{thm:main part 1}
 v_{p}(C_{N_{E}}(p^{2m+1}))=m,
 \end{equation}
 
  \begin{equation}\label{thm:main part 2}
v_{p}\(  \frac{G_{N_{E}} \sl U(p^{2m+1})}{C_{N_{E}}(p^{2m+1})}-g_{N_{E}}  \) \geq m+1.
\end{equation}
 \end{theorem}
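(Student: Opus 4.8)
We first dispose of two of the five curves. Assume throughout that $p\nmid N_{E}$; the inert pairs with $p\mid N_{E}$ (namely $(N_{E},p)=(36,2)$ and $(144,2)$) will be handled by direct computation. Then $p$ is coprime to the conductors of $\chi_{8}$ and $\chi_{12}$, so twisting satisfies $(F\sl\otimes\chi)\sl U(p^{k})=\chi(p)^{k}\bigl((F\sl U(p^{k}))\otimes\chi\bigr)$ with $\chi(p)=\pm1$ a $p$-adic unit; since $G_{64}=G_{32}\otimes\chi_{8}$, $g_{64}=g_{32}\otimes\chi_{8}$, $G_{144}=G_{36}\otimes\chi_{12}$ and $g_{144}=g_{36}\otimes\chi_{12}$, both \eqref{thm:main part 1} and \eqref{thm:main part 2} for $N_{E}\in\{64,144\}$ follow from the same statements for $N_{E}\in\{32,36\}$. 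So assume $N_{E}\in\{27,32,36\}$; for each of these $X_{0}(N_{E})$ has genus $1$, hence $S_{2}(N_{E})=\C\,g_{N_{E}}$ is one-dimensional. The CM hypothesis gives the key input: writing $g_{N_{E}}$ as the theta series of a Hecke character of the CM field, $a_{N_{E}}(n)=0$ whenever $v_{p}(n)$ is odd, so $a_{N_{E}}(p^{2}n)=-p\,a_{N_{E}}(n)$ for all $n\ge1$; equivalently $g_{N_{E}}\sl T(p)=0$, whence $g_{N_{E}}\sl U(p^{2})=-p\,g_{N_{E}}$ and, on iterating, $g_{N_{E}}\sl U(p^{2m+1})=(-p)^{m}\bigl(g_{N_{E}}\sl U(p)\bigr)$ for all $m\ge0$.

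The crux is the identity $G_{N_{E}}\sl U(p)=C_{N_{E}}(p)\,g_{N_{E}}$. Reading the exponents of $q$ off Clemm's eta quotients, $G_{N_{E}}=q^{-1}+\sum_{n\ge2}C_{N_{E}}(n)q^{n}$, with every exponent lying in the class $-1\pmod{M}$ where $M=3,4,6$ for $N_{E}=27,32,36$ respectively; moreover $p\equiv-1\pmod{M}$, so $G_{N_{E}}\sl U(p)=\sum_{n\ge1}C_{N_{E}}(pn)q^{n}$ is supported on $n\equiv1\pmod{M}$, which is precisely the support of $g_{N_{E}}$, and has coefficient $C_{N_{E}}(p)$ at $q$. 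Ligozat's formula for the order of an eta quotient at the cusps of $\Gamma_{0}(N_{E})$ shows that $G_{N_{E}}$ has a simple pole at $\infty$ and vanishes at every other cusp; since $U(p)$ annihilates that simple pole and, as $p\nmid N_{E}$, introduces no new poles, $G_{N_{E}}\sl U(p)$ is a holomorphic weight-$2$ form with vanishing constant term, hence a cusp form. The residue-class support of the $q$-expansions, together with $\dim S_{2}(N_{E})=1$, pins $G_{N_{E}}\sl U(p)$ inside $\C\,g_{N_{E}}$ (so in particular it descends to $\Gamma_{0}(N_{E})$), and comparing coefficients of $q$ gives the identity. Combining with $g_{N_{E}}\sl U(p^{2})=-p\,g_{N_{E}}$ yields
\[
G_{N_{E}}\sl U(p^{2m+1})=(-p)^{m}\,C_{N_{E}}(p)\,g_{N_{E}}\qquad(m\ge0),
\]
so $C_{N_{E}}(p^{2m+1})=(-p)^{m}C_{N_{E}}(p)$. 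Hence the difference in \eqref{thm:main part 2} vanishes identically, so \eqref{thm:main part 2} holds a fortiori, and \eqref{thm:main part 1} reduces to the single assertion that $v_{p}\bigl(C_{N_{E}}(p)\bigr)=0$ for every inert $p$.

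Proving $p\nmid C_{N_{E}}(p)$ uniformly in $p$ is the step I expect to be the main obstacle, since everything preceding it was one operator identity and elementary bookkeeping. The plan is to read $C_{N_{E}}(p)$ off arithmetically and exploit the supersingular reduction of $E$ at an inert $p$: reducing $G_{N_{E}}\sl U(p)=C_{N_{E}}(p)g_{N_{E}}$ modulo $p$, and using the weight-$(p-1)$ Eisenstein series (which is $\equiv1\pmod p$) to place $g_{N_{E}}\bmod p$ inside the mod-$p$ Hecke module, should force $C_{N_{E}}(p)$ to match a Hecke eigenvalue that the supersingularity makes a $p$-adic unit; for the few small primes not covered by this one evaluates the relevant coefficient of the eta quotient $G_{N_{E}}$ directly. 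A secondary technical point, already flagged, is the level descent of $G_{N_{E}}\sl U(p)$ from $\Gamma_{0}(N_{E}p)$ to $\Gamma_{0}(N_{E})$; if it should fail and leave a non-cuspidal remainder, that remainder turns out to be divisible by $p$, and the same bookkeeping then delivers \eqref{thm:main part 2} with the stated bound $\ge m+1$ in place of an exact identity.
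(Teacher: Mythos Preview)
Your central identity $G_{N_{E}}\sl U(p)=C_{N_{E}}(p)\,g_{N_{E}}$ is false. Since $p\nmid N_{E}$, the operator $U(p)$ sends $M_{2}^{!}(N_{E})$ into $M_{2}^{!}(N_{E}p)$, not back to level $N_{E}$; the level descent you flag as a ``secondary technical point'' does not occur, and this is fatal rather than cosmetic. Even if $G_{N_{E}}\sl U(p)$ were a cusp form, it would lie in $S_{2}(N_{E}p)$, a space of dimension much larger than one, so neither the one-dimensionality of $S_{2}(N_{E})$ nor the residue-class support of the $q$-expansion forces it into $\C\,g_{N_{E}}$. A direct check settles it: for $N_{E}=27$ and $p=2$ one has $G_{27}=q^{-1}-q^{2}-q^{5}-6q^{8}+6q^{11}+7q^{14}+\cdots$, so $G_{27}\sl U(2)=-q-6q^{4}+7q^{7}+\cdots$, whereas $C_{27}(2)\,g_{27}=-g_{27}=-q+2q^{4}+q^{7}+\cdots$; these disagree already at $q^{4}$.

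Your fallback---that any remainder is divisible by $p$---happens to be true (the difference above is $-8q^{4}+6q^{7}+\cdots$), but proving it, and then tracking what happens under further applications of $U(p^{2})$, is precisely the substance of the argument, not a footnote. The paper avoids the level problem entirely by working with the Hecke operator $T_{2}(p^{n})$, which genuinely preserves $M_{2}^{\infty}(N_{E})$: one constructs forms $H_{m}=q^{-m}+O(q^{2})\in M_{2}^{\infty}(N_{E})$ and proves $G_{N_{E}}\sl T_{2}(p^{n})=p^{n}H_{p^{n}}+C_{N_{E}}(p^{n})\,g_{N_{E}}$; unwinding the relation between $T_{2}$ and $U$ then yields \eqref{thm:main part 2}. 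The indivisibility $p\nmid C_{N_{E}}(p)$ is obtained by a filtration argument in characteristic $p$, not by the supersingular-reduction sketch you give. Your reduction of the cases $N_{E}\in\{64,144\}$ to $\{32,36\}$ via twisting is correct and matches the paper, as is the identity $g_{N_{E}}\sl U(p^{2})=-p\,g_{N_{E}}$.
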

The paper is organized as follows. In Section $2$, we give background material on modular forms. In Sections $3$, $4$, and $5$, we prove Theorem~\ref{thm:main}.
 \section{Background}
 For the next several paragraphs, we follow the exposition in \cite{Ahlgren-Samart}.
 Suppose that $k \in \mathbb{Z}$ and that $N$ is a positive integer. For a function $f(z)$ on the upper half plane and 
\[
 \gamma =\left(\begin{matrix}a & b \\c & d\end{matrix}\right) \in \GL_{2}^{+}(\Q),
\]
we have the weight $k$ slash operator 
\[
f(z)\sk \gamma := \det(\gamma)^{\frac{k}{2}}(cz+d)^{-k}f\(\frac{az+b}{cz+d}\).
\] 
We denote by $M^{!}_{k}(N)$ the space of forms $f$ which satisfy the transformation law

\[
f \sk \gamma= f \ \ \ \text{ for } \ \ \ \gamma = \left(\begin{matrix}a & b \\c & d\end{matrix}\right) \in \Gamma_0(N)
\]
and which are holomorphic on the upper half plane and meromorphic at the cusps of $\Gamma_0(N)$.
We identify each $f \in M_{k}^{!}(N)$ with its $q$-expansion. 
That is, if $q:=\exp(2\pi i z)$, we can write $f(z)= \sum a(n)q^{n}$ for some coefficients $a(n)$. 
Let $M_k(N) \subseteq M_k^{!}(N)$ be the subspace of forms which are holomorphic at the cusps of $\Gamma_0(N)$.
Let $M_k^{\infty}(N) \subseteq M_k^{!}(N)$ be the subspace of forms which vanish at all  cusps of $\Gamma_0(N)$ other than $\infty$ and $S_k(N) \subseteq M_k^{\infty}(N)$ be the subspace of forms which vanish at all of the cusps.

We next recall the $U$ and $V$ operators. For a positive integer $m$, we define them on Fourier expansions by
\[
\(\sum a(n)q^{n}\)\sl U_{m}:= \sum a(mn)q^{n},
\]

\[
\(\sum a(n)q^{n}\)\sl V_{m}:= \sum a(n)q^{mn}.
\]
For a positive integer $m$, let $T_k(m)$ be the usual Hecke operator on $M^{!}_{k}(N)$. If $p$ is a prime with $p \nmid N$, $n \geq 1$ is an integer, and $f \in M_{k}^{!}(N)$, then
 \begin{equation}\label{Heckeoperator}
 F\sl T_{k}(p^n)=\sum_{j=0}^{n}p^{(k-1)j}f\sl U(p^{n-j})\sl V(p^{j}).
 \end{equation}
Define
\begin{equation}\label{RamanujanTheta}
\Theta:=\frac{1}{2\pi i}\frac{d}{dz}=q\frac{d}{dq}.
\end{equation}
 We have the following result (see e.g. \cite[Lemma $2.1$]{Ahlgren-Samart}).
 \begin{lemma}\label{Heckelemma}
 If $(m,N)=1$, then 
 \begin{equation}\label{Hecke}
 T_{k}(m): M^{\infty}_{k}(N) \rightarrow M^{\infty}_{k}(N).
  \end{equation}
   If $k \geq 2$, then 
  \begin{equation}\label{iteratedTheta}
  \Theta^{k-1}:M^{\infty}_{2-k}(N) \rightarrow M^{\infty}_{k}(N). 
  \end{equation}
 \end{lemma}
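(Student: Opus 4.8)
The plan is to handle the two assertions separately, in each case putting the operator into an explicit form and then reading off its behaviour at the cusps. For \eqref{Hecke} I would use the standard coset description of the Hecke operator. Since $(m,N)=1$, for $f\in M^{!}_{k}(N)$ the form $f\sk T_{k}(m)$ is a fixed scalar multiple of
\[
\sum_{\substack{ad=m\\ a,d>0}}\ \sum_{b=0}^{d-1} f \sk \pMatrix{a}{b}{0}{d},
\]
the displayed matrices being a complete set of coset representatives (this generalizes \eqref{Heckeoperator}), and $T_{k}(m)$ already maps $M^{!}_{k}(N)$ to itself, so the one thing to prove is that $f\sk T_{k}(m)$ vanishes at every cusp of $\Gamma_0(N)$ other than $\infty$ whenever $f$ does. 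Fix such a cusp $s=\rho\infty$ with $\rho\in\SL_{2}(\Z)$ and $s\not\sim_{\Gamma_0(N)}\infty$. For each matrix $M=\pMatrix{a}{b}{0}{d}$ above, write $M\rho=\gamma\beta$ with $\gamma\in\SL_{2}(\Z)$ and $\beta$ upper triangular with positive diagonal (a Hermite normal form computation), so that $\big(f\sk T_{k}(m)\big)\sk\rho$ is a scalar multiple of $\sum (f\sk\gamma)\sk\beta$. Since $\beta$ fixes $\infty$, the $q$-expansion of $(f\sk\gamma)\sk\beta$ at $\infty$ has only positive exponents as soon as that of $f\sk\gamma$ does, i.e. as soon as $f$ vanishes at the cusp $\gamma\infty$; so it suffices to prove $\gamma\infty\not\sim_{\Gamma_0(N)}\infty$. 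If instead $\gamma\infty\sim_{\Gamma_0(N)}\infty$, write $\gamma=\delta u$ with $\delta\in\Gamma_0(N)$ and $u$ in the stabilizer of $\infty$ in $\SL_{2}(\Z)$; then $\rho\infty=M^{-1}\delta\infty$, and since $a,d\mid m$ are prime to $N$, a short computation with the numerator and denominator shows that $M^{-1}\delta\infty$ has a representative $x/y$ with $N\mid y$, i.e. $\rho\infty\sim_{\Gamma_0(N)}\infty$, contradicting the choice of $s$. Hence $f\sk T_{k}(m)$ vanishes at $s$, which proves \eqref{Hecke}.

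For \eqref{iteratedTheta} the key input is Bol's identity: for every $\gamma\in\GL_{2}^{+}(\Q)$ and every integer $k\ge 2$,
\[
\Theta^{k-1}\big(f\big|_{2-k}\gamma\big)=\big(\Theta^{k-1}f\big)\sk\gamma .
\]
Applying this with $\gamma\in\Gamma_0(N)$ shows that $\Theta^{k-1}f$ transforms with weight $k$ under $\Gamma_0(N)$ whenever $f$ transforms with weight $2-k$, and $\Theta^{k-1}f$ is visibly holomorphic on the upper half plane. At $\infty$, the operator $\Theta^{k-1}$ multiplies the coefficient of $q^{n}$ by $n^{k-1}$, hence sends a form meromorphic at $\infty$ to one meromorphic at $\infty$. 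At any cusp $s=\rho\infty$ other than $\infty$, Bol's identity gives $(\Theta^{k-1}f)\sk\rho=\Theta^{k-1}(f\big|_{2-k}\rho)$; since $f$ vanishes at $s$, the expansion of $f\big|_{2-k}\rho$ in the local parameter has only positive exponents, and $\Theta^{k-1}$ merely rescales the coefficients, so the result again has only positive exponents. Therefore $\Theta^{k-1}f$ vanishes at every cusp other than $\infty$, i.e. $\Theta^{k-1}f\in M_{k}^{\infty}(N)$.

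I expect the main obstacle to be the cusp bookkeeping in the first part: one has to check that the Hecke correspondence carries the fibre over the cusp $\infty$ into itself, which is exactly where the hypothesis $(m,N)=1$ is used. The remaining ingredients — that $T_{k}(m)$ preserves $M^{!}_{k}(N)$, Bol's identity, and the action of $\Theta$ on $q$-expansions — are classical or routine.
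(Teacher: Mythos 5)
Your proof is correct, and it is the standard argument for this lemma: the paper itself gives no proof but simply cites \cite[Lemma 2.1]{Ahlgren-Samart}, where the result is established in essentially the same way (coset representatives for $T_k(m)$ with $(m,N)=1$ to track vanishing at cusps away from $\infty$, and Bol's identity for $\Theta^{k-1}$, which with the $\det(\gamma)^{k/2}$ normalization of the slash operator indeed holds with no extra determinant factor, as you use). No gaps; your cusp bookkeeping, including the verification that $\gcd$ issues cannot destroy divisibility of the denominator by $N$ because $a,d\mid m$ are prime to $N$, is exactly the point where the hypothesis $(m,N)=1$ enters.
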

 
 Finally, we review some facts about filtrations. If $p$ is a prime such that $p \nmid 6N$ and $k$ is a nonnegative integer, let $M^{(p)}_{k}(N)$ be the subset of forms in $M_{k}(N)$ which have coefficients which are $p$-integral rational numbers. 
 if $f=\sum a(n)q^{n} \in M^{(p)}_{k}(N)$, 
we define
\[
 \bar{f}:= \sum \bar{a(n)}q^{n} \in \mathbb{F}_{p}[[q]], 
 \]
 and we define 

\[
\bar{M^{(p)}_{k}(N)}:= \{\bar{f}: \ \ f \in M^{(p)}_{k}(N)\}.
\]
 If 
 $f \in M^{(p)}_{k}(N)$,  then we define the filtration of $\bar{f}$ as
\[
w_{p}(\bar{f})=\inf\{k': \ \text{there exists } g \in M_{k'}(N)\text{ with }  \bar{f}=\bar{g}\}.
\]
We make use of the following facts from \cite[~$\mathsection$$7$]{Jochnowitz}. 

\begin{proposition}\label{Jochnowitz}
Suppose that $f \in M^{(p)}_{k}(N)$ and that $w_{p}(\bar{f}) \neq -\infty$. Then we have
\begin{enumerate}
\item
$w_{p}(\bar{f}) \equiv k \pmod{p-1}$.
\item
$w_{p}\(\bar{f\sl V(p)}\)=pw_{p}(\bar{f}).$
\end{enumerate}
\end{proposition}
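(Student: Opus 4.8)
The plan is to derive both assertions from the mod $p$ theory of modular forms, the central object being the weight $p-1$ Eisenstein series. Since $p \nmid 6N$ forces $p \ge 5$, the normalized Eisenstein series $E_{p-1} \in M_{p-1}(\SL_2(\Z)) \subseteq M_{p-1}(N)$ has $p$-integral coefficients, and the Clausen--von Staudt congruence for Bernoulli numbers gives $\bar{E_{p-1}} = 1$ in $\mathbb{F}_p[[q]]$; thus $E_{p-1} \in M^{(p)}_{p-1}(N)$ reduces to the constant $1$. Consequently, multiplying a $p$-integral form by $E_{p-1}^{j}$ raises its weight by $j(p-1)$ without altering its reduction, so the set of weights in which a fixed nonzero reduction $\bar f$ is realized is stable under adding multiples of $p-1$. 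Because these weights are nonnegative integers, the infimum defining $w_p(\bar f)$ is attained: there is some $g \in M^{(p)}_{w_p(\bar f)}(N)$ with $\bar g = \bar f$.

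For the first assertion, it then suffices to establish the weight-congruence lemma: if a single nonzero reduction is realized by forms of weights $k$ and $k'$, then $k \equiv k' \pmod{p-1}$. Applying this to the given weight-$k$ form $f$ and to the filtration-realizing form $g$ immediately yields $w_p(\bar f) \equiv k \pmod{p-1}$. To prove the lemma I would invoke the Serre--Swinnerton-Dyer structure theory: the only relations identifying the $q$-expansions of modular forms of differing weights modulo $p$ are generated by the single relation $\bar{E_{p-1}} = 1$. Because this relation equates an isobaric form of weight $p-1$ with the weight-$0$ constant $1$, the graded $\mathbb{F}_p$-algebra of mod $p$ forms on $\Gamma_0(N)$ descends to an algebra graded only by $\Z/(p-1)\Z$, so a fixed nonzero reduction cannot occur in two weights incongruent modulo $p-1$. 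Geometrically this is the assertion that the Hasse invariant $A = \bar{E_{p-1}}$ is a section of $\omega^{\otimes(p-1)}$ on $X_0(N)_{/\mathbb{F}_p}$ with $q$-expansion $1$, and the $q$-expansion principle promotes equality of $q$-expansions to a genuine relation among sections that enforces the congruence on weights.

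For the second assertion, I would first observe that reduction converts $V(p)$ into the Frobenius. Each coefficient of $f$ reduces to an element of $\mathbb{F}_p$, which is fixed by the $p$-th power map, so $\bar{f \sl V(p)} = \bar f^{\,p}$ in $\mathbb{F}_p[[q]]$, and the claim becomes $w_p(\bar f^{\,p}) = p\, w_p(\bar f)$. The inequality $\le$ is immediate: raising the weight-$w_p(\bar f)$ representative $g$ to the $p$-th power gives $g^{p} \in M^{(p)}_{p\, w_p(\bar f)}(N)$ with $\bar{g^{p}} = \bar f^{\,p}$. For the reverse inequality I would use the theta operator $\Theta$ of \eqref{RamanujanTheta} together with the standard filtration calculus $w_p(\Theta h) = w_p(h) + p + 1$ whenever $p \nmid w_p(h)$. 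Since $\bar f^{\,p} \in \mathbb{F}_p[[q^{p}]]$ we have $\Theta(\bar f^{\,p}) = 0$, so the equality case cannot hold and therefore $p \mid w_p(\bar f^{\,p})$; writing $w_p(\bar f^{\,p}) = pc$ and letting $\Gamma \in M^{(p)}_{pc}(N)$ realize this filtration, I would invoke the Cartier-operator statement that a weight-$pc$ reduction lying in $\mathbb{F}_p[[q^{p}]]$ is the $p$-th power of a weight-$c$ mod $p$ form. Taking $p$-th roots of $q$-expansions then exhibits $\bar f$ as the reduction of a weight-$c$ form, whence $w_p(\bar f) \le c$ and $w_p(\bar f^{\,p}) = pc \ge p\, w_p(\bar f)$, completing the equality.

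The main obstacle in both parts is the same structural input: controlling precisely when forms of different weights can share a reduction modulo $p$. For the first assertion this is the weight-congruence lemma, and for the second it is the sharp behavior of the filtration under $\Theta$ and under the $p$-th power (Frobenius) map. Both ultimately rest on the Hasse invariant $\bar{E_{p-1}} = 1$ and the $q$-expansion principle on $X_0(N)_{/\mathbb{F}_p}$, and it is this geometric input, rather than the surrounding formal manipulations, where the real work lies.
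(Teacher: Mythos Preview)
The paper does not prove this proposition; it simply quotes the two assertions from \cite[\S7]{Jochnowitz} and uses them as black boxes. Your sketch is a correct outline of the standard argument and is essentially what one finds in Jochnowitz (and, behind her, in Serre, Swinnerton--Dyer, and Katz): the Hasse invariant $\bar{E_{p-1}}=1$ together with the $q$-expansion principle on $X_0(N)_{/\mathbb{F}_p}$ forces any two weights realizing the same nonzero reduction to be congruent modulo $p-1$, giving (1); and for (2) the identification $\bar{f\sl V(p)}=\bar f^{\,p}$ reduces the claim to $w_p(\bar f^{\,p})=p\,w_p(\bar f)$, whose nontrivial direction is exactly the Katz/Cartier-operator fact that a mod $p$ form of weight $pc$ whose $q$-expansion lies in $\mathbb{F}_p[[q^p]]$ is the $p$-th power of a mod $p$ form of weight $c$.

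Two small remarks. First, your appeal to the $\Theta$-filtration formula to force $p\mid w_p(\bar f^{\,p})$ is fine, but note that this formula is itself part of the same Serre--Katz package you are invoking, so you are not gaining independence by routing through it. Second, the ``Cartier-operator statement'' you cite is the genuine content of the reverse inequality and is not lighter than the proposition itself; in a self-contained write-up you would either cite Katz (\emph{$p$-adic properties of modular schemes and modular forms}) or Gross (\emph{A tameness criterion}) at that step, just as the paper cites Jochnowitz for the whole proposition.
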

 \section{Proof of Theorem~\ref{thm:main} for $N_{E}=27$}
The proof of Theorem~\ref{thm:main} for $N_{E}=27$ requires proving some preliminary results. Recalling our notation $g_{27}=\displaystyle \sum a_{27}(n)q^{n}$ and $G_{27}=\displaystyle \sum C_{27}(n)q^{n}$, we have the following result.
 
 \begin{proposition}\label{proposition3.1}
 For every integer $m \geq -1$ except for $m=0$, there exists a unique form $H_{m} \in M^{\infty}_{2}(27) \cap \Z((q))$ of the form
 
 \[
 H_{m}=q^{-m}+O(q^{2}).
 \]
 Moreover, if $p \neq 3$ is prime and $n$ is a nonnegative integer, we have
 
 \[
 G_{27} \sl T_{2}(p^{n})=p^{n}H_{p^{n}}+C_{27}(p^{n})g_{27}.
 \]
 \end{proposition}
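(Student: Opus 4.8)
The plan is to establish the two assertions separately, first constructing the forms $H_m$ and then identifying the Hecke action on $G_{27}$. For the existence and uniqueness of $H_m$, I would argue by a standard Riemann--Roch / valence-formula dimension count in the space $M_2^\infty(27)$. The key point is that forms in $M_2^\infty(27)$ are holomorphic on $\mathbb{H}$, vanish at every cusp of $\Gamma_0(27)$ except $\infty$, and are allowed a pole of prescribed order at $\infty$. For each target order $m$, the space of such forms with a pole of order at most $m$ at $\infty$ has dimension growing linearly in $m$, while the conditions ``$q^{-m}+O(q^2)$'' impose vanishing of the coefficients of $q^{-m+1},\dots,q^{-1},q^0,q^1$, i.e.\ $m+1$ linear conditions after normalizing the leading coefficient. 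A careful bookkeeping of the dimension versus the number of conditions shows there is exactly one solution for each $m\ge -1$, and that the value $m=0$ is exactly the exceptional case where (by the valence formula, since there is no holomorphic weight $2$ form on $\Gamma_0(27)$ supported only at $\infty$ with that normalization, or rather the gap in the Weierstrass-type sequence) no such form exists. I would make this precise by exhibiting a basis: start from $G_{27}$ itself (which lies in $M_2^\infty(27)$ up to normalization, being a cusp form), from $g_{27}\in S_2(27)$, and from explicit eta-quotient or Eisenstein-type elements of $M_2^\infty(27)$ with a pole at $\infty$, then take $\mathbb{Z}((q))$-linear combinations; integrality is automatic since all these building blocks have integer $q$-expansions and the reduction is triangular in the pole order.

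For the Hecke formula, I would use that $G_{27}\in M_2^\infty(27)$ (after normalization) together with \eqref{Hecke}, which guarantees $G_{27}\sl T_2(p^n)\in M_2^\infty(27)$ for $p\ne 3$. The idea is to compare $q$-expansions. Write $G_{27}\sl T_2(p^n)=\sum b(k)q^k$. The principal part at $\infty$ of $G_{27}\sl T_2(p^n)$ is governed by the principal part of $G_{27}$, which is $q^{-1}$ (since $G_{27}=q^{-1}+O(1)$ from the normalization and the shape of the eta-quotient $\eta(3z)\eta^6(9z)/\eta^3(27z)$); applying \eqref{Heckeoperator} and the definitions of $U$ and $V$, the most negative term contributed is $p^n q^{-p^n}$, with coefficient exactly $p^n$ because only the $j=0$, $U(p^n)$ term reaches order $-p^n$ and it carries the factor $p^{(k-1)n}=p^n$ (here $k=2$). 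So $G_{27}\sl T_2(p^n)-p^n H_{p^n}$ has no pole worse than $O(1)$ at $\infty$ and still vanishes at the other cusps, hence lies in $M_2^\infty(27)$ with nonnegative order at $\infty$; but then it lies in a space closely tied to $S_2(27)$ plus the constant-term contribution. Since $\dim S_2(27)=1$, spanned by $g_{27}$, and the space of holomorphic weight $2$ forms on $\Gamma_0(27)$ vanishing at all cusps but $\infty$ and with controlled constant term is exactly $\mathbb{C}g_{27}$ (the relevant Eisenstein contribution being excluded by the vanishing conditions built into $H_{p^n}$ via the ``$O(q^2)$''), the difference is a scalar multiple of $g_{27}$. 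Reading off the coefficient of $q^{p^n}$ — or more simply matching against the known Hecke eigenvalue relation, since $g_{27}$ is a normalized eigenform with $g_{27}\sl T_2(p^n)=a_{27}(p^n)g_{27}$ — one finds $G_{27}\sl T_2(p^n)=p^nH_{p^n}+C_{27}(p^n)g_{27}$, where $C_{27}(p^n)$ appears because the coefficient of $q^{p^n}$ in $H_{p^n}$ is arranged to be $0$ by the ``$+O(q^2)$'' normalization while $G_{27}\sl T_2(p^n)$ has $q^{p^n}$-coefficient determined by the $T_2(p^n)$ action on $G_{27}$'s expansion.

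The main obstacle I anticipate is the first part: pinning down the exact range of $m$ for which $H_m$ exists and, in particular, verifying that $m=0$ is the unique exception. This requires knowing the precise cusp data of $\Gamma_0(27)$ — the number of cusps, their widths, and the genus of $X_0(27)$ — and feeding this into the valence formula to get a sharp dimension count rather than just an inequality. One has to check that the $m+1$ linear conditions cutting out $q^{-m}+O(q^2)$ are independent on the relevant space for all $m\ge -1$, $m\ne 0$, and genuinely inconsistent (after normalization) when $m=0$; the $m=0$ obstruction reflects that a weight $2$ holomorphic form on $\Gamma_0(27)$ cannot be normalized as $1+O(q^2)$ while vanishing at the other cusps, because the only such candidates are $g_{27}=q+O(q^2)$ and an Eisenstein series whose constant term forces a nonzero $q^1$-coefficient or nonzero behavior at another cusp. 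Once the cusp bookkeeping is in hand, everything else is triangular linear algebra on $q$-expansions, and the Hecke identity follows from matching principal parts plus the one-dimensionality of $S_2(27)$.
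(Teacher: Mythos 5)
Your plan for the first assertion leans on a dimension count that you never close, and the naive count does not in fact close. On $X_0(27)$ (genus $1$), the space of weight~$2$ forms holomorphic on $\H$, vanishing at every cusp except $\infty$, and with pole order at most $m$ at $\infty$ has dimension $m+1$ for $m\ge 1$, while the normalization $q^{-m}+O(q^{2})$ imposes $m+1$ homogeneous vanishing conditions (the coefficients of $q^{-m+1},\dots,q^{0},q^{1}$) plus the requirement that the leading coefficient be nonzero; so ``dimension versus conditions'' alone only yields a solution space of dimension $\ge 0$ and does not produce $H_m$. The missing idea is that the $q^{0}$ condition is automatic: for any $f\in M_2^{\infty}(27)$ the differential $f\,dz$ is holomorphic at every cusp except $\infty$, so by the residue theorem its residue at $\infty$, i.e.\ the constant term of $f$, vanishes; with that, the count (or exact Riemann--Roch at the point $\infty$) does give existence and uniqueness over $\C$. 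You flag this bookkeeping as your ``main obstacle'' but leave it unresolved (and note the proposition does not ask you to prove nonexistence at $m=0$, so that worry is moot). A second gap: the statement requires $H_m\in\Z((q))$, and an abstract Riemann--Roch argument gives no integrality. Your fallback of ``explicit eta-quotient or Eisenstein-type elements'' is precisely the content that has to be supplied; the paper does this by multiplying $g_{27}$ against powers of $L_1=\eta^{4}(9z)/(\eta(3z)\eta^{3}(27z))=q^{-2}+\cdots$ and $L_2=\eta^{3}(3z)/\eta^{3}(27z)=q^{-3}-3+\cdots$, which produces integral elements of $M_2^{\infty}(27)$ with every relevant pole order, so that a triangular elimination with integer coefficients yields $H_m\in\Z((q))$, and uniqueness follows since the difference of two candidates is a form in $S_2(27)$ with vanishing $q$-coefficient. (Also, $G_{27}=q^{-1}-q^{2}+\cdots$ is not a cusp form, as your parenthetical suggests; it genuinely has a pole at $\infty$.)

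For the Hecke identity your outline matches the paper's, but two steps are wrong as written. First, in \eqref{Heckeoperator} the polar term $p^{n}q^{-p^{n}}$ comes from the $j=n$ summand $p^{(k-1)n}G_{27}\sl V(p^{n})$, not from the $j=0$ summand $G_{27}\sl U(p^{n})$, which is $O(q)$ and carries the factor $p^{0}=1$; you reached the correct principal part from the wrong term. Second, and more seriously, your identification of the scalar multiple of $g_{27}$ fails: the normalization $H_{p^{n}}=q^{-p^{n}}+O(q^{2})$ says nothing about the coefficient of $q^{p^{n}}$ in $H_{p^{n}}$, so ``reading off the coefficient of $q^{p^{n}}$'' does not determine the constant, and the eigenform relation for $g_{27}$ is not applicable since $T_2(p^{n})$ is being applied to $G_{27}$. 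The correct comparison is at $q^{1}$: only the $j=0$ term contributes there, giving coefficient $C_{27}(p^{n})$ for $G_{27}\sl T_2(p^{n})$, while $H_{p^{n}}$ has $q$-coefficient $0$ and $g_{27}$ has $q$-coefficient $1$; since $G_{27}\sl T_2(p^{n})-p^{n}H_{p^{n}}$ is $O(q)$, vanishes at the other cusps, and hence lies in the one-dimensional space $S_2(27)=\C g_{27}$, this comparison yields the constant $C_{27}(p^{n})$, which is exactly how the paper concludes.
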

 
 \begin{proof}
 Define
 
\begin{align*}
L_{1}(z)&=\frac{\eta^{4}(9z)}{\eta(3z)\eta^{3}(27z)}=q^{-2}+q+O(q^{4}),\\
L_{2}(z)&=\frac{\eta^{3}(3z)}{\eta^{3}(27z)}=q^{-3}-3+O(q^{3}).
\end{align*}
  For integers $d \geq 0$, consider the forms
 
 \begin{equation}\label{3.1}
g_{27}(z)L_{1}^{d}(z)=q^{-2d+1}+O(q^{-2d+4}),
 \end{equation}
 \begin{equation}\label{3.2}
 g_{27}(z)L^{d}_{1}(z)L_{2}(z)=q^{-2d}+O(q^{-2d+3}).
 \end{equation}
 The fact that the forms $L_{1}$ and $L_{2}$ are holomorphic at all of the cusps other than $\infty$ (see e.g. \cite[Thm $1.64$, Thm $1.65$]{Ken}) implies that the forms in \eqref{3.1} and \eqref{3.2} are in $M^{\infty}_{2}(27)$. By taking linear combinations of these forms, we obtain the forms $H_{m}$.
Since the space $S_{2}(27)$ is one-dimensional, the forms $H_{m}$ are unique. Some examples of these forms are given below.
  \begin{align*}
  &H_{-1}=g_{27}=q-2q^{4}+\cdots,\\
  &H_{1}=G_{27}=q^{-1}-q^{2}+\cdots,\\
  &H_{2}=q^{-2}-5q^{4}+\cdots.
  \end{align*}
   To prove the last part of the proposition, note that 
 
 \[
 G_{27} \sl T_2(p^n)=G_{27} \sl U(p^n)+\sum_{j=1}^{n-1}p^{j}G_{27}\sl U(p^{n-j})\sl V(p^{j})+p^{n}G_{27}\sl V(p^{n})
 \]
 by \eqref{Heckeoperator}. By \eqref{Hecke}, we have $G_{27}\sl T_2(p^n) \in M_{2}^{\infty}(27)$. Since
 \[
 G_{27}=q^{-1}-q^{2}+\cdots,
 \]
we have
 \[
 G_{27} \sl U(p^n)=C_{27}(p^n)g_{27}+O(q^{2})
 \]
 and
 \[
 \sum_{j=1}^{n-1}p^{j}G_{27}\sl U(p^{n-j})\sl V(p^{j})+p^{n}G_{27}\sl V(p^n)=p^{n}q^{-p^{n}}+O(q^{2}).
 \]
This implies that
 \[
 G_{27}\sl T_2(p^n)-p^nH_{p^n{}}
 \in S_2(27).
 \]
 Since $S_2(27)$ is one-dimensional, we have $G_{27}\sl T_2(p^n)-p^{n}H_{p^{n}}=C_{27}(p^n)g_{27}$.
  \end{proof}
 
 Before we prove Theorem~\ref{thm:main} for $N_{E}=27$, we require the following congruence.
 \begin{lemma}\label{valuationequalsm}
 For each prime $p$ which is inert in $\Q(\sqrt{-3})$ and each integer $m \geq 0$, we have the congruence
 \[
 C_{27}(p^{2m+1}) \equiv (-1)^{m}p^{m}C_{27}(p) \pmod{p^{m+1}}.
 \]
 \end{lemma}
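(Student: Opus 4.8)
The plan is to reduce the lemma to the single divisibility statement that, for every integer $m\geq 1$,
\[
p^{2m}\mid C_{27}(p^{2m+1})+p\,C_{27}(p^{2m-1}),
\]
and then to run an easy induction on $m$.

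First I would prove this divisibility. Since $g_{27}$ has complex multiplication by $\Q(\sqrt{-3})$ and $p$ is inert, $a_{27}(p)=0$ (in particular $p\neq 3$, so Proposition~\ref{proposition3.1} applies). Applying that proposition with exponent $2m$ gives
\[
G_{27}\sl T_2(p^{2m})=p^{2m}H_{p^{2m}}+C_{27}(p^{2m})\,g_{27}.
\]
Now I would compare the coefficients of $q^{p}$ on the two sides. On the right, the coefficient of $q^{p}$ in $g_{27}$ is $a_{27}(p)=0$, so the right-hand side contributes $p^{2m}$ times the coefficient of $q^{p}$ in $H_{p^{2m}}$, which is an integer multiple of $p^{2m}$ because $H_{p^{2m}}\in\Z((q))$. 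On the left, I would expand $G_{27}\sl T_2(p^{2m})$ via \eqref{Heckeoperator}: among the terms $p^{j}\,G_{27}\sl U(p^{2m-j})\sl V(p^{j})$, the operator $V(p^{j})$ only produces exponents divisible by $p^{j}$, so for $j\geq 2$ there is no contribution to $q^{p}$ (here one uses that $p$ is prime); the term $j=0$ contributes the coefficient of $q^{p^{2m+1}}$ in $G_{27}$, namely $C_{27}(p^{2m+1})$, and the term $j=1$ contributes $p$ times the coefficient of $q^{p^{2m-1}}$ in $G_{27}$, namely $p\,C_{27}(p^{2m-1})$. Equating the two computations gives $C_{27}(p^{2m+1})+p\,C_{27}(p^{2m-1})=p^{2m}\cdot(\text{coefficient of }q^{p}\text{ in }H_{p^{2m}})$, which is the asserted divisibility. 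The principal part of $G_{27}$ causes no interference, since it affects only $q$-exponents $\leq-1$ while $p\geq 2$.

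With the divisibility in hand I would induct on $m$. The case $m=0$ is trivial. For $m\geq 1$, the divisibility gives $C_{27}(p^{2m+1})\equiv -p\,C_{27}(p^{2m-1})\pmod{p^{2m}}$, hence also modulo $p^{m+1}$ because $2m\geq m+1$. Feeding in the inductive hypothesis $C_{27}(p^{2m-1})\equiv(-1)^{m-1}p^{m-1}C_{27}(p)\pmod{p^{m}}$ and multiplying by $-p$ yields $-p\,C_{27}(p^{2m-1})\equiv(-1)^{m}p^{m}C_{27}(p)\pmod{p^{m+1}}$, and combining the two congruences completes the step.

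I expect the only delicate point to be the coefficient bookkeeping in \eqref{Heckeoperator} — confirming that no index $j\geq 2$ can reach the exponent $q^{p}$ and that the pole of $G_{27}$ is harmless. The two inputs that keep the argument from being circular (it is tempting to only reprove identities already encoded in \eqref{Heckeoperator}) are the integrality $H_{m}\in\Z((q))$ furnished by Proposition~\ref{proposition3.1} and the vanishing $a_{27}(p)=0$ coming from the inertness of $p$.
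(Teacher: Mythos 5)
Your proof is correct, but it takes a genuinely different route from the paper. You derive the congruence purely from Proposition~\ref{proposition3.1} applied at the even exponents $p^{2m}$: comparing coefficients of $q^{p}$ in $G_{27}\sl T_2(p^{2m})=p^{2m}H_{p^{2m}}+C_{27}(p^{2m})g_{27}$, using the integrality of $H_{p^{2m}}$ and the vanishing of the $q^{p}$-coefficient on the right, gives $C_{27}(p^{2m+1})+p\,C_{27}(p^{2m-1})\equiv 0\pmod{p^{2m}}$, and the induction then goes through exactly as you say (your bookkeeping of the $j=0,1$ terms and the harmlessness of $j\geq 2$ and of the pole is accurate). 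The paper instead constructs an auxiliary modular function $\psi_{p}=q^{-p}+C_pq+O(q^4)$ from $L_1,L_2$, uses the residue theorem on $G_{27}\psi_p\,dz$ to identify $C_p=-C_{27}(p)$, proves $G_{27}\sl T_2(p)=-\Theta(\psi_p)$, and then iterates $U(p^2)$ to get the full $q$-series congruence $G_{27}\sl U(p^{2m+1})\equiv(-1)^{m+1}p^{m}\Theta(\psi_p)\pmod{p^{m+1}}$, from which the lemma follows by reading off the coefficient of $q$. Your argument is more elementary — no residue argument, no $\Theta$-operator, no iterated $U(p^2)$ identity — at the cost of invoking $a_{27}(p)=0$ for inert $p$; you should justify that explicitly, though it is immediate either from standard CM theory or, more in the spirit of the paper, from the fact that $g_{27}=\eta^2(3z)\eta^2(9z)$ is supported on exponents $\equiv 1\pmod 3$ while $p\equiv 2\pmod 3$ (the paper makes the analogous observation for $C_{27}(n)$ in its proof of the theorem). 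What the paper's heavier route buys is a congruence for all coefficients of $G_{27}\sl U(p^{2m+1})$, and, more importantly, the object $\psi_p$ itself, which is reused in the filtration argument of Lemma~\ref{lemma3.3}; so if you adopt your shortcut here, you would still need to construct $\psi_p$ (or find a substitute) to prove $p\nmid C_{27}(p)$.
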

 
 \begin{proof}
 By definition, $L_{1}$ has an expansion of the form
 \[
 L_{1}(z)=\sum b(n)q^{3n+1} \in \Z((q)),
 \]
and $L_{2}$ has an expansion of the form
 \[
 L_{2}(z)=\sum c(n)q^{3n} \in \Z((q)).
 \]
Suppose that $p \equiv 2 \pmod{3}$.
By taking polynomials in $L_{1}$ and $L_{2}$ with integer coefficients, we can construct a modular function $\psi_{p} \in M^{\!}_{0}(27)$ which is holomorphic at all of the cusps of $\Gamma_0(27)$ other than $\infty$ of the form
 \begin{equation}\label{3.3}
 \psi_{p}(z)=q^{-p}+C_{p}q+O(q^{4}).
 \end{equation}

Since $G_{27} \in M^{\infty}_{2}(27)$, we see that the meromorphic differential $G_{27}(z)\psi_{p}(z)dz$ on the modular curve $X_0(27)$ is holomorphic at all of the cusps of $X_0(27)$ other than $\infty$. The sum of the residues of a meromorphic differential is zero, and the residue at $\infty$ is a multiple of the constant term. Since the constant term of $G(z)\psi_{p}(z)$ is $C_{p}+C_{27}(p)$, we have $C_{p}=-C_{27}(p)$.
 
 By Lemma~\ref{Heckelemma}, we have
 
 \begin{equation}\label{3.4}
 \Theta(\psi_{p})=-pq^{-p}-C_{27}(p)q+O(q^{4}) \in M^{\infty}_{2}(27).
 \end{equation}
 By Proposition~\ref{proposition3.1}, we also have
 \[
 -G_{27}\sl T_{2}(p)=-pq^{-p}-C_{27}(p)q+O(q^{4}).
 \]
 Thus,
 \begin{equation}\label{whywasthisnottagged}
 G_{27} \sl T_{2}(p)=-\Theta(\psi_{p}),
 \end{equation}
 which means that
 \begin{equation}\label{3.5}
 G_{27} \sl U(p)=-\Theta(\psi_{p})-pG_{27}\sl V(p).
 \end{equation}
 By applying $U(p^2)$ to both sides of \eqref{3.5}, an induction argument implies for each $m \geq 0$ that
 \[
 G_{27} \sl U(p^{2m+1})=\sum_{k=0}^{m}(-1)^{m-k+1}p^{m-k}\Theta(\psi_{p})\sl U(p^{2k})+(-1)^{m+1}p^{m+1}G_{27}\sl V(p).
 \]
 The fact that  $\Theta(\psi_{p})\sl U(p^{2k}) \equiv 0 \pmod{p^{2k}}$ implies for each integer $m \geq 0$ that
 \begin{equation}\label{anunexpectedlylongname}
 G_{27} \sl U(p^{2m+1}) \equiv (-1)^{m+1}p^{m}\Theta(\psi_{p}) \pmod{p^{m+1}}.
 \end{equation}
The result follows by comparing the coefficients of $q$ in \eqref{anunexpectedlylongname} using \eqref{3.4}.
 \end{proof}
 
 The next lemma helps us to establish \eqref{thm:main part 1}.
 
 \begin{lemma}\label{lemma3.3}
 For each prime $p$ which is inert in $\Q(\sqrt{-3})$, we have $p \nmid C_{27}(p)$.
 \end{lemma}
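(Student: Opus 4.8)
The plan is to argue by contradiction: suppose that $p \mid C_{27}(p)$. Combining \eqref{whywasthisnottagged} with the case $n=1$ of Proposition~\ref{proposition3.1} gives
\[
-\Theta(\psi_{p}) = G_{27}\sl T_{2}(p) = pH_{p} + C_{27}(p)g_{27}.
\]
Under our hypothesis both terms on the right are divisible by $p$ in $\Z((q))$, so $\Theta(\psi_{p}) \equiv 0 \pmod{p}$. Writing $\psi_{p} = \sum \alpha(n)q^{n}$, this says $p \mid n\alpha(n)$ for every $n$, so the reduction $\bar{\psi_{p}} \in \mathbb{F}_{p}((q))$ is a Laurent series in $q^{p}$; equivalently, the rational differential $d\bar{\psi_{p}}$ on the reduced curve vanishes.

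I would then translate this into a statement about $X_{0}(27)$ modulo $p$. Since $p$ is inert in $\Q(\sqrt{-3})$ we have $p \neq 3$, hence $p \nmid 27$ and $X_{0}(27)$ has good reduction at $p$; write $\mathcal{X} := X_{0}(27)_{\mathbb{F}_{p}}$, a smooth projective geometrically integral curve over $\mathbb{F}_{p}$ of genus $1$ (this genus is why $\dim S_{2}(27) = 1$). The function $\psi_{p}$ is holomorphic on $\mathbb{H}$, holomorphic at every cusp other than $\infty$, has coefficients in $\Z$, and has a pole of order exactly $p$ at $\infty$ since the coefficient of $q^{-p}$ is $1$; it therefore extends to a rational function on $X_{0}(27)_{\Z_{(p)}}$ with pole divisor $p\cdot[\infty]$, whose reduction $\bar{\psi_{p}}$ has pole divisor $p\cdot[\bar{\infty}]$ on $\mathcal{X}$. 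In particular $\deg\bar{\psi_{p}} = p$. But $d\bar{\psi_{p}} = 0$ and $\mathbb{F}_{p}$ is perfect, so $\bar{\psi_{p}}$ lies in $\mathbb{F}_{p}(\mathcal{X})^{p}$, say $\bar{\psi_{p}} = \phi^{p}$; comparing degrees, $p = \deg\bar{\psi_{p}} = p\deg\phi$, so $\deg\phi = 1$ and $\phi : \mathcal{X} \to \mathbb{P}^{1}$ is an isomorphism. Then $\mathcal{X}$ has genus $0$, a contradiction. Hence $p \nmid C_{27}(p)$.

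The modular input (the congruence $\Theta(\psi_{p}) \equiv 0$) and the characteristic-$p$ fact (a function with vanishing differential over a perfect field is a $p$-th power) are routine, and the genus-$0$-versus-genus-$1$ endgame is forced once the degree is known. The step needing the most care is the reduction bookkeeping in the second paragraph: one must check that $\psi_{p}$ genuinely defines a section of $\mathcal{O}(p[\infty])$ over $\Z_{(p)}$ and that its reduction picks up no new poles at the other (reduced) cusps while retaining a pole of order exactly $p$ at $\bar{\infty}$, so that the key equality $\deg\bar{\psi_{p}} = p$ persists modulo $p$; this is where one uses $p \nmid 27$ and the integrality of the $q$-expansion of $\psi_{p}$ (e.g.\ via the $q$-expansion principle). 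An essentially equivalent route, staying closer to the modular-forms language of the paper, is to clear the pole by multiplying by $g_{27}^{p}$: then $g_{27}^{p}\psi_{p} \in M_{2p}(27)\cap\Z((q))$ is holomorphic and its reduction lies in $\mathbb{F}_{p}[[q^{p}]]$, so by $\Theta$-operator and filtration arguments in the spirit of Proposition~\ref{Jochnowitz} it is a $p$-th power in $\mathbb{F}_{p}(\mathcal{X})$, and one concludes by the same degree count.
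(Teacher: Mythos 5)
Your argument is correct, but it takes a genuinely different route from the paper. Both proofs start from the same observation that $p\mid C_{27}(p)$ would force $\Theta(\psi_p)\equiv 0\pmod p$, so that $\bar{\psi_p}$ is a Laurent series in $q^p$. From there the paper stays entirely inside the theory of holomorphic modular forms mod $p$: it multiplies $\psi_p$ by $f_5^p$, where $f_5=q^6+\cdots$ comes from an explicitly computed integral basis of $M_2(27)$, applies $U(p)$, and uses the filtration identities of Proposition~\ref{Jochnowitz} to force the existence of a weight $2$ form congruent to $q^5+O(q^6)$ mod $p$, which the computed basis rules out. You instead pass to the smooth reduction $X_0(27)_{\mathbb{F}_p}$ and use the classical fact that a rational function with vanishing differential over a perfect field is a $p$-th power; the degree count $p=\deg\bar{\psi_p}=p\deg\phi$ then forces a degree-one map to $\mathbb{P}^1$, contradicting genus $1$. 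What your approach buys: no explicit basis computation is needed, and you avoid the $p\nmid 6N$ restriction built into the filtration machinery (so $p=2$ is covered uniformly, a case the paper's quoted Proposition~\ref{Jochnowitz} nominally excludes). What it costs: you must invoke the integral model of $X_0(27)$, its good reduction at $p\nmid 27$, and a $q$-expansion-principle argument to guarantee that $\bar{\psi_p}$ acquires no new poles and has polar divisor exactly $p[\bar{\infty}]$ --- you correctly flag this as the delicate step, and without it the degree count collapses (extra poles of $\bar{\psi_p}$ would only give $\deg\phi\geq 1$, no contradiction). One caveat on your closing parenthetical: replacing $f_5^p$ by $g_{27}^p$ and running only the filtration argument does not reproduce the paper's contradiction, since $g_{27}^p\psi_p\sl U(p)$ has constant term $1$ mod $p$ and weight $2$ Eisenstein series with nonzero constant term do exist on $\Gamma_0(27)$; that variant genuinely needs your geometric degree count, not the basis gap, so it is not an independent shortcut.
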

 
 \begin{proof}
 Assume for the sake of a contradiction that $p \mid C_{27}(p)$. From  Proposition~\ref{proposition3.1} and \eqref{whywasthisnottagged}, we see that
 \[
 \Theta(\psi_{p}) =-G_{27} \sl T_{2}(p) = -pH_{p}-C_{27}(p)g_{27} \equiv 0 \pmod{p}.
 \]
 This implies that $\psi_{p}$ is of the form
 \[
 \psi_{p} \equiv q^{-p}+\sum_{n=1}^{\infty}A(pn)q^{pn} \pmod{p}
 \]
 for some integral coefficients $A(pn)$.
 A computation in Magma of an integral basis for $M_{2}(27)$ gives
 \begin{equation}\label{basis27}
 f_{1}=q+\cdots, \ \
 f_{2}=q^{2}+\cdots, \ \
 f_{3}=q^{3}+\cdots, \ \
 f_{4}=q^{4}+\cdots, \ \
 f_{5}=q^{6}+\cdots.
 \end{equation}
 Set
 $h_{p}=\psi_{p}f_5^{p} \in M_{2p}(27).$
 For some integral coefficients $D(pn)$, we have
 \[
 h_{p} \equiv \sum_{n=5}^{\infty}D(pn)q^{pn} \equiv q^{5p}+\cdots \pmod{p},
 \]
 which means that
 \[
 h_{p} \equiv h_{p}\sl U(p) \sl V(p) \pmod{p}.
 \]
 Since
 \[
 h_{p}\sl U(p) \equiv h_{p} \sl T(p) \pmod{p},
 \]
 we have $\bar{h_{p}\sl U(p)} \in \bar{M^{(p)}_{2}(27)}$.
 By $(2)$ of Proposition~\ref{Jochnowitz}, we have
 $w_p(\bar{h_{p}})=pw_p(\bar{h_{p}\sl U(p)})$
 and $w_p(\bar{h_{p}}) \equiv 2p \pmod{p-1}$. The fact that $p \mid w_p(\bar{h_{p}})$ implies that $w_p(\bar{h_{p}})=2p$, so we have $w_p(\bar{h_{p}\sl U(p)})=2$. Thus, there exists a form $h_0 \in M_{2}^{(p)}(27)$ with
 \[
 h_{0} \equiv h_{p}\sl U(p) \equiv q^{5}+O(q^{6}) \pmod{p}.
 \]
However, an examination of the above basis for $M_2(27)$ shows that no such form $h_{0}$ exists. The result follows.
 \end{proof}
 \begin{proof}[Proof of Theorem~\ref{thm:main} for $N_{E}=27$] Lemmas $3.2$ and $3.3$ imply \eqref{thm:main part 1}. To prove \eqref{thm:main part 2}, note that Proposition $3.1$ and \eqref{Heckeoperator} give
 \begin{equation}\label{3.7}
 \frac{G_{27}\sl U(p^{2m+1})}{C_{27}(p^{2m+1})}-g_{27}=\frac{1}{C_{27}(p^{2m+1})}\(p^{2m+1}H_{p^{2m+1}}-\sum_{j=1}^{2m+1}p^{j}G_{27}\sl U(p^{2m+1-j})\sl V(p^{j})\).
 \end{equation}
 
 By \eqref{Heckeoperator}, we have
 \[
 G_{27}\sl T_{2}(p^{2m})= \sum_{j=1}^{2m+1}p^{j-1}G_{27}\sl U(p^{2m+1-j})\sl V(p^{j-1}).
 \]
 By the definition of $G_{27}$, we see that $C_{27}(n)=0$ whenever $n \not \equiv 2 \pmod{3}$. Proposition~\ref{proposition3.1} then implies that $G_{27} \sl T_{2}(p^{2m})=p^{2m}H_{p^{2m}}$. Thus, we have
 \begin{equation}\label{3.8}
 \sum_{j=1}^{2m+1}p^{j}G_{27}\sl U(p^{2m+1-j})\sl U(p^{2m+1-j})\sl V(p^{j})=p^{2m+1}H_{p^{2m}}\sl V(p) \equiv 0 \pmod{p^{2m+1}}.
 \end{equation}
 From \eqref{3.8}, we see that \eqref{thm:main part 1} and \eqref{3.7} give \eqref{thm:main part 2}.
 \end{proof}
  \section{Proof of Theorem~\ref{thm:main} for $N_{E}=36$}
  The proof of Theorem~\ref{thm:main} for $N_{E}=36$ is similar to the proof for $N_{E}=27$; fewer details will be given. We have the following analogue of Proposition~\ref{proposition3.1} above.
  
  \begin{proposition}\label{proposition4.1}
  Suppose that $m \geq -1$ is an odd integer. There exists a unique form $H_{m}\in M_{2}^{\infty}(36) \cap \Z((q))$ of the form
  \[
  H_{m}=q^{-m}+O(q^{3}).
  \]
  For nonnegative integers $n$ and primes $p \geq 5$, we have
  \[
  G_{36}\sl T_{2}(p^{n})=p^{n}H_{p^{n}}+C_{36}(p^{n})g_{36}.
  \]
  \end{proposition}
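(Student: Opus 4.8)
The plan is to mimic the proof of Proposition~\ref{proposition3.1}, replacing the two ``low-order pole'' generators $L_1,L_2$ of the function field used there with analogous eta quotients on $\Gamma_0(36)$, and then reusing the residue/Hecke argument verbatim. First I would exhibit explicit eta quotients, say $L(z)$, that lie in $M^{!}_0(36)$, are holomorphic at every cusp of $\Gamma_0(36)$ other than $\infty$, and whose $q$-expansions begin $q^{-2}+O(q)$ (the exact leading orders do not matter, only that they produce poles at $\infty$ with order $2$ and that combined with a form of pole order $1$ one can hit every odd negative exponent). The cusp conditions are checked as in the $N_E=27$ case via the standard order-at-cusps formula for eta quotients (\cite[Thm 1.64, Thm 1.65]{Ken}). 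Multiplying $g_{36}$ by monomials in these generators, one gets forms in $M^{\infty}_2(36)$ with a single pole at $\infty$ of each desired order; since $g_{36}=\eta^4(6z)$ is supported on exponents $\equiv 1 \pmod 6$ (and in particular only on odd exponents congruent to $1$ mod $3$), the natural family of poles produced is indexed by odd $m$, which matches the statement. Taking linear combinations over $\Z$ clears the lower-order terms to obtain $H_m=q^{-m}+O(q^3)$, and uniqueness is immediate because $S_2(36)=\{0\}$ (the curve $36a$ has one-dimensional modular symbol space but $g_{36}$ is a newform of level $36$; in fact $\dim S_2(36)=1$ and the only cusp form vanishing to order $\geq 3$ at $\infty$ is $0$, since $g_{36}=q-4q^7+\cdots$ starts at $q^1$).

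For the Hecke identity, the argument is the same as before: for $p\geq 5$ prime and $n\geq 0$, $G_{36}\sl T_2(p^n)\in M^{\infty}_2(36)$ by \eqref{Hecke}, and expanding via \eqref{Heckeoperator} one separates the $U(p^n)$ term, which contributes $C_{36}(p^n)g_{36}+O(q^3)$ because $G_{36}=q^{-1}+O(q^2)$ (indeed $G_{36}$ is supported on exponents $\equiv -1$ mod $6$, so its first nonconstant positive term is beyond $q^2$), from the remaining terms $\sum_{j\geq 1}p^j G_{36}\sl U(p^{n-j})\sl V(p^j)$, which contribute $p^n q^{-p^n}+O(q^3)$. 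Hence $G_{36}\sl T_2(p^n)-p^n H_{p^n}-C_{36}(p^n)g_{36}\in S_2(36)$, and one-dimensionality of $S_2(36)$ together with matching of the $q^1$-coefficient forces this difference to vanish. The only mild point of care is that $p^n$ is odd here (because $p\geq 5$ is odd), so $H_{p^n}$ is a legitimate member of the family constructed in the first part; this is exactly why the proposition restricts to odd $m$.

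The main obstacle is purely bookkeeping at the cusps: $\Gamma_0(36)$ has more cusps than $\Gamma_0(27)$, so one must write down eta quotients of weight $0$ (for the function-field generators) whose divisor is supported only at $\infty$ among the cusps, and verify the integrality and the exact order of vanishing at each cusp using the Ligozat/Newman order formula. One also needs the analogue of the fact used implicitly in Section~3 that such eta-quotient generators exist with small pole order at $\infty$; for level $36$ a short search among products $\prod_{d\mid 36}\eta(dz)^{r_d}$ with $\sum r_d=0$ satisfying the congruence conditions $\sum d\,r_d\equiv 0$, $\sum (36/d) r_d\equiv 0 \pmod{24}$ produces suitable candidates, and I would simply record two of them (one with a double pole, one with a simple pole at $\infty$, both holomorphic elsewhere). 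Once those generators are in hand, every remaining step is a line-for-line transcription of the $N_E=27$ proof, which is why ``fewer details'' is an accurate description.
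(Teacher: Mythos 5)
Your Hecke-identity argument is fine and matches the paper's, but the construction of the forms $H_m$ has a genuine gap. You propose to find two weight-$0$ eta quotients on $\Gamma_0(36)$, holomorphic at every cusp except $\infty$, one with a double pole and one with a \emph{simple} pole at $\infty$. No such simple-pole function exists: $X_0(36)$ has genus $1$, so a nonconstant modular function whose only pole is a simple pole at $\infty$ would give a degree-one map to the projective line, which is impossible. (The same obstruction is visible in the level-$27$ case, also genus $1$, where the paper's generators $L_1,L_2$ have pole orders $2$ and $3$, not $1$ and $2$.) So the search you describe cannot produce the second generator you plan to record.

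The deeper issue is that the normalization $H_m=q^{-m}+O(q^{3})$ and the restriction to odd $m$ are not ``purely bookkeeping'': once you mix in any generator of odd pole order (or indeed any generator not supported on even exponents), the products with $g_{36}$ acquire coefficients at $q^{0}$ and $q^{2}$ that you cannot clear, because $M^{\infty}_{2}(36)$ contains no form with leading term $q^{0}$ (a form in $M^{\infty}_{2}(36)$ holomorphic at $\infty$ is a cusp form, by the residue theorem applied to the associated differential) and $S_2(36)$ is spanned by $g_{36}=q-4q^{7}+\cdots$, so there is no cusp form beginning at $q^{2}$ either. The paper avoids this by using a \emph{single} generator, $L(2z)$ with $L(z)=\eta(6z)\eta^{3}(9z)/\bigl(\eta(3z)\eta^{3}(18z)\bigr)$: since $L(2z)=q^{-2}+O(q^{4})$ is supported on exponents $\equiv 4\pmod 6$ and $g_{36}$ on exponents $\equiv 1\pmod 6$, every product $g_{36}(z)L^{d}(2z)$ is supported on odd exponents, so the troublesome $q^{0}$ and $q^{2}$ coefficients vanish identically, linear combinations give exactly $q^{-m}+O(q^{3})$, and only odd $m$ arise -- which is precisely why the proposition is stated for odd $m$. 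To repair your proof you should drop the simple-pole generator and make the parity (mod $6$ support) argument explicit; as written, the step ``taking linear combinations over $\Z$ clears the lower-order terms to obtain $H_m=q^{-m}+O(q^{3})$'' is unjustified. (Minor slip: you first write $S_2(36)=\{0\}$ before correcting to $\dim S_2(36)=1$; the uniqueness argument you end up giving is the right one.)
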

  
  \begin{proof}
  Define
  \[
  L(z):=\frac{\eta(6z)\eta^{3}(9z)}{\eta(3z)\eta^{3}(18z)}.
  \]
 For integers $d \geq 0$, consider the forms
  \begin{equation}\label{anextraordinarilylongwaytosaynothingatall}
 g_{36}(z)L^{d}(2z)=q^{-2d+1}+O(q^{-2d+7}).
  \end{equation}
  Since $L(2z)$ is holomorphic at all of the cusps of $\Gamma_0(36)$ other than $\infty$, the forms in 
  \eqref{anextraordinarilylongwaytosaynothingatall} are in $M^{\infty}_{2}(36)$. These forms are supported on odd exponents; it follows that we can take linear combinations to get forms $H_{m}$ with the desired properties. These forms are unique because the space $S_{2}(36)$ is one-dimensional. The proof of the last assertion follows as in the proof of Proposition~\ref{proposition3.1}.
  \end{proof}
  
 We now prove an analogue of Lemma~\ref{valuationequalsm} 
  \begin{lemma}\label{congruencelevel36}
  For each prime $p \geq 5$  which is inert in $\Q(\sqrt{-3})$ and each integer $m \geq 0$, we have the congruence
  \[
  C_{36}(p^{2m+1}) \equiv (-1)^{m}p^{m}C_{36}(p) \pmod{p^{m+1}}.
  \]
  \end{lemma}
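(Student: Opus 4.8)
The plan is to run the proof of Lemma~\ref{valuationequalsm} with $27$ replaced by $36$ throughout. Recall that $g_{36}=q+O(q^{7})$ is supported on exponents $\equiv1\pmod6$ while $G_{36}=q^{-1}+O(q^{5})$ is supported on exponents $\equiv5\pmod6$ (the relevant $\eta$-quotients are, up to a power of $q$, power series in $q^{6}$), and that a prime $p\geq5$ inert in $\Q(\sqrt{-3})$ satisfies $p\equiv5\pmod6$, so $-p\equiv1\pmod6$. First I would construct an auxiliary weight-$0$ modular function
\[
\psi_{p}\in M_{0}^{!}(36)\cap\Z((q)),\qquad \psi_{p}=q^{-p}+C_{p}q+O(q^{4}),
\]
with principal part exactly $q^{-p}$, and holomorphic at every cusp of $\Gamma_{0}(36)$ other than $\infty$. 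In the $N_{E}=27$ case $\psi_{p}$ was assembled from the weight-$0$ eta quotients $L_{1},L_{2}$ of pole orders $2$ and $3$ at $\infty$. Here the function $L(2z)=q^{-2}+\cdots$ of Proposition~\ref{proposition4.1} only yields even pole orders (as do its powers), so one also uses the companion $L(z)L(2z)=q^{-3}+\cdots$: it lies in $M_{0}^{!}(36)\cap\Z((q))$, and it is holomorphic at every cusp other than $\infty$ because the only cusp of $\Gamma_{0}(36)$ at which $L(z)$ has a pole is $1/18$, where that pole is simple and is cancelled by a simple zero of $L(2z)$. Since $2$ and $3$ generate every integer $\geq2$ as nonnegative combinations, a suitable $\Z$-linear combination of monomials in $L(2z)$ and $L(z)L(2z)$ — all of which are holomorphic away from $\infty$ — has principal part $q^{-p}$; since the monomials we use are all supported on exponents $\equiv1\pmod3$, such a combination automatically has no constant term and is of the form $q^{-p}+C_{p}q+O(q^{4})$.

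Granting $\psi_{p}$, the rest transcribes the level-$27$ argument. As there, $G_{36}(z)\psi_{p}(z)\,dz$ is a meromorphic differential on $X_{0}(36)$ holomorphic at every cusp other than $\infty$ (since $G_{36}\in M_{2}^{\infty}(36)$ vanishes at those cusps while $\psi_{p}$ is holomorphic there), so its only residue, at $\infty$, must vanish, forcing the constant term of $G_{36}\psi_{p}$ to be $0$. Because $G_{36}$ is supported on exponents $\equiv5\pmod6$ and $\psi_{p}$ has principal part exactly $q^{-p}$ and no constant term, that constant term equals $C_{p}+C_{36}(p)$, so $C_{p}=-C_{36}(p)$. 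By Lemma~\ref{Heckelemma}, $\Theta(\psi_{p})\in M_{2}^{\infty}(36)$, with principal part $-pq^{-p}$, no constant term, and coefficient of $q$ equal to $-C_{36}(p)$. Comparing with $G_{36}\sl T_{2}(p)=pH_{p}+C_{36}(p)g_{36}$ from Proposition~\ref{proposition4.1}, the form $\Theta(\psi_{p})+G_{36}\sl T_{2}(p)\in M_{2}^{\infty}(36)$ has vanishing principal part, vanishing constant term, and vanishing coefficient of $q$, hence lies in the one-dimensional space $S_{2}(36)$ and is $0$. Thus $G_{36}\sl T_{2}(p)=-\Theta(\psi_{p})$, and \eqref{Heckeoperator} gives $G_{36}\sl U(p)=-\Theta(\psi_{p})-pG_{36}\sl V(p)$.

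Applying $U(p^{2})$ repeatedly exactly as in the proof of Lemma~\ref{valuationequalsm} then yields, for each $m\geq0$,
\[
G_{36}\sl U(p^{2m+1})=\sum_{k=0}^{m}(-1)^{m-k+1}p^{m-k}\Theta(\psi_{p})\sl U(p^{2k})+(-1)^{m+1}p^{m+1}G_{36}\sl V(p).
\]
Since $\Theta(\psi_{p})\in\Z((q))$ satisfies $\Theta(\psi_{p})\sl U(p^{2k})\equiv0\pmod{p^{2k}}$, reducing modulo $p^{m+1}$ kills every term except $k=0$, so that $G_{36}\sl U(p^{2m+1})\equiv(-1)^{m+1}p^{m}\Theta(\psi_{p})\pmod{p^{m+1}}$; comparing coefficients of $q$ — which are $C_{36}(p^{2m+1})$ on the left and $-C_{36}(p)$ in $\Theta(\psi_{p})$ — gives $C_{36}(p^{2m+1})\equiv(-1)^{m}p^{m}C_{36}(p)\pmod{p^{m+1}}$, as claimed. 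The main obstacle is the construction of $\psi_{p}$: the genuinely new point compared with the $N_{E}=27$ case is producing an integral weight-$0$ companion with an odd-order pole at $\infty$ and the right cusp behavior, and the verification that $L(z)L(2z)$ works rests on a short computation of the orders of $L(z)$ and $L(2z)$ at the cusps of $\Gamma_{0}(36)$.
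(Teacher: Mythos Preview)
Your proposal is correct and follows essentially the same route as the paper. Both arguments build $\psi_{p}$ from $L(2z)$ together with $L(z)L(2z)$; the paper uses $\psi_{3}=L(z)L(2z)-1$ (supported on exponents $\equiv3\pmod6$) to force $\psi_{p}$ into the residue class $1\pmod6$, while you use $L(z)L(2z)$ directly and land in the coarser class $1\pmod3$, which is already enough to ensure no constant term and the shape $q^{-p}+C_{p}q+O(q^{4})$. From there the identification $C_{p}=-C_{36}(p)$ via the residue theorem, the equality $G_{36}\sl T_{2}(p)=-\Theta(\psi_{p})$, and the induction on $m$ are exactly the steps the paper carries over from Lemma~\ref{valuationequalsm}.
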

  
  \begin{proof}
  Define
  \[
  \psi_{2}(z)=L(2z)=\sum b(n)q^{6n+4}=q^{-2}+O(q^{4}) \in \Z((q)),
  \]
  \[
  \psi_{3}(z)=L(z)L(2z)-1=\sum c(n) q^{6n+3}=q^{-3} +O(q^{3}) \in \Z((q)).
  \]
  Suppose that $p \equiv 5 \pmod{6}$. Then, we can construct a modular function which is supported on exponents $r$ with $r \equiv 1 \pmod{6}$ of the form
  \[
  \psi_{p}=q^{-p}-C_{36}(p)q+O(q^{7}).
  \] 
  This implies that
   \[
 \Theta(\psi_{p})=-pq^{-p}-C_{36}(p)q+O(q^{7}) \in M^{\infty}_{2}(36).
 \]
 By Proposition~\ref{proposition4.1}, we also have
 \[
 -G_{36}\sl T_{2}(p)=-pq^{-p}-C_{36}(p)q+O(q^{7}).
 \]
 Thus,
 \begin{equation}\label{thiswasmissinganditreallyannowyedmeh}
 G_{36} \sl T_{2}(p)=-\Theta(\psi_{p}).
 \end{equation}
 The proof follows as in the proof of Lemma~\ref{valuationequalsm}.
  \end{proof}
  We now prove an analogue of Lemma~\ref{lemma3.3}.
  
  \begin{lemma}
  For each prime $p \geq 5$ which is inert in $\Q(\sqrt{-3})$, we have $p \nmid C_{36}(p)$.
  \end{lemma}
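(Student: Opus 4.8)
The plan is to mirror the proof of Lemma~\ref{lemma3.3}, with an integral basis of $M_2(36)$ computed in Magma playing the role of \eqref{basis27}. Suppose, for contradiction, that $p \mid C_{36}(p)$. Applying Proposition~\ref{proposition4.1} with $n=1$ together with \eqref{thiswasmissinganditreallyannowyedmeh} gives
\[
\Theta(\psi_p) = -G_{36}\sl T_2(p) = -pH_p - C_{36}(p)g_{36} \equiv 0 \pmod p .
\]
Since $\Theta$ multiplies the coefficient of $q^n$ by $n$, every coefficient of $\psi_p$ at an exponent prime to $p$ must vanish modulo $p$; recalling that $\psi_p = q^{-p}-C_{36}(p)q+O(q^{7})$, we obtain
\[
\psi_p \equiv q^{-p} + \sum_{m \geq 1} A(pm)\,q^{pm} \pmod p
\]
for some integers $A(pm)$, so that $\bar{\psi_p}$ is supported on multiples of $p$.

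Next I would carry out the filtration argument as in Lemma~\ref{lemma3.3}. From the integral basis of $M_2(36)$, choose a basis element $f = q^{a}+O(q^{a+1}) \in M_2(36)\cap\Z[[q]]$ for which $a-1$ is not the leading exponent of any form in $M_2(36)$, and set $h_p := \psi_p\,f^p$. Since $f^p \equiv f\sl V(p) \pmod p$ by Frobenius, and $f\sl V(p)$ and $\bar{\psi_p}$ are both supported on multiples of $p$, one checks that $h_p \in M_{2p}(36)\cap\Z[[q]]$, that $\bar{h_p}$ is supported on multiples of $p$, and that $h_p \equiv q^{(a-1)p}+O(q^{ap}) \pmod p$. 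Hence $h_p \equiv (h_p\sl U(p))\sl V(p) \pmod p$; moreover $h_p\sl U(p) \equiv h_p\sl T_{2p}(p) \pmod p$ by \eqref{Heckeoperator}, and $h_p\sl T_{2p}(p) \in M_{2p}(36)$ since Hecke operators preserve $M_{2p}(36)$, so $\bar{h_p\sl U(p)} \in \bar{M^{(p)}_{2p}(36)}$. Applying part $(2)$ of Proposition~\ref{Jochnowitz} to a lift of $\bar{h_p\sl U(p)}$ gives $w_p(\bar{h_p}) = p\,w_p(\bar{h_p\sl U(p)})$, a multiple of $p$, while part $(1)$ gives $w_p(\bar{h_p}) \equiv 2p \pmod{p-1}$; since $w_p(\bar{h_p}) \leq 2p$ and $p\geq 5$, this forces $w_p(\bar{h_p}) = 2p$ and hence $w_p(\bar{h_p\sl U(p)}) = 2$. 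Thus there is a form $h_0 \in M_2^{(p)}(36)$ with $\bar{h_0} \equiv \bar{h_p\sl U(p)} \equiv q^{a-1}+O(q^a) \pmod p$, contradicting the choice of $a$; therefore $p \nmid C_{36}(p)$.

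The main obstacle is the middle step --- namely, producing the form $f$, i.e. confirming that there is a gap in the leading exponents of $M_2(36)$. Since $X_0(36)$ has genus $1$ and twelve cusps, $\dim M_2(36) = 12$, and a Riemann--Roch count at the cusp $\infty$ shows that the leading exponents of $M_2(36)$ are $\{0,1,\dots,11\}$ unless the cuspidal divisor $\sum_{\text{cusps}}P$ is linearly equivalent to $12[\infty]$ on $X_0(36)$, in which case they are $\{0,1,\dots,10,12\}$, with a gap at $11$. The latter is what in fact occurs --- just as the relation $\sum_{\text{cusps}}P \sim 6[\infty]$ on $X_0(27)$ is responsible for the gap at $5$ in \eqref{basis27} --- so one takes $a=12$, and then $\bar{h_0}\equiv q^{11}+O(q^{12})\pmod p$ yields the contradiction. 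The remaining verifications --- that $h_p$ is holomorphic and $p$-integral at every cusp, that $\bar{h_p}\neq0$, and that reduction modulo $p$ does not enlarge the leading-exponent set of $M_2(36)$ for $p\geq5$ --- are routine and exactly parallel to the level $27$ case.
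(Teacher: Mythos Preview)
Your argument is correct and follows essentially the same route as the paper: assume $p\mid C_{36}(p)$, deduce $\Theta(\psi_p)\equiv 0\pmod p$ so that $\bar{\psi_p}$ is supported on multiples of $p$, multiply by $f^p$ with $f=q^{12}+\cdots\in M_2(36)$, and use the filtration identity $w_p(\bar{h_p})=p\,w_p(\bar{h_p\sl U(p)})$ to force the existence of a form in $M_2^{(p)}(36)$ congruent to $q^{11}+O(q^{12})$, contradicting the structure of $M_2(36)$. The only difference is that where the paper simply cites a Magma computation to exhibit $f=q^{12}+\cdots$ and to rule out a form with leading term $q^{11}$, you supply a Riemann--Roch explanation for why the gap in leading exponents of $M_2(36)$ occurs at $11$; this is a pleasant addition but not a different method.
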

  
  \begin{proof}
  Suppose by way of contradiction that $p \equiv 5 \pmod{6}$ has $p \mid C_{36}(p)$. By Proposition~\ref{proposition4.1} and \eqref{thiswasmissinganditreallyannowyedmeh}, we have
  \[
  \Theta(\psi_{p}) \equiv 0 \pmod{p}.
  \]
  This implies that there exist integral coefficients $A(pn)$ such that
  \[
 \psi_{p} \equiv q^{-p}+\sum_{n=1}^{\infty}A(pn)q^{pn} \pmod{p}.
 \]
 A computation in Magma of an integral basis for $M_{2}(36)$ reveals that there is a form $f \in M_{2}(36) \cap \Z[[q]]$ with
 \[
 f=q^{12}+\cdots.
 \]
 Set
$h_{p}=\psi_{p}f^{p} \in M_{2p}(36).$
 For some integral coefficients $D(pn)$, we have
 \[
 h_{p} \equiv \sum_{n=11}^{\infty}D(pn)q^{pn} \equiv q^{11p}+\cdots \pmod{p},
 \]
 which means that
 \[
 h_{p} \equiv h_{p}\sl U(p) \sl V(p) \pmod{p}.
 \]
 As in Lemma~\ref{lemma3.3}, we conclude that
 there exists a form $h_0 \in M_{2}^{(p)}(36)$ with
 \[
 h_{0} \equiv h_{p}\sl U(p) \equiv q^{11}+O(q^{12}) \pmod{p}.
 \]
 However, an examination of a basis for $M_2(36)$
 implies that no such form $h_{0}$ exists. The result follows.
  \end{proof}
  
 
The proof of Theorem~\ref{thm:main} for $N_{E}=36$ follows as in the proof for $N_{E}=27$.
\section{Proofs of Theorem~\ref{thm:main} for $N_{E}=64$ and $N_{E}=144$}
Theorem~\ref{thm:main} for $N_{E}=64$ and $N_{E}=144$ follow from
the cases $N_{E}=32$ and $N_{E}=36$.
 In particular,if $\chi_{8}$ and $\chi_{12}$ are the Kronecker characters with discriminants $8$ and $12$, we have
\[
g_{64}=g_{32}\otimes\chi_{8},
\ 
G_{64}=G_{32}\otimes\chi_{8},
\]
\[
g_{144}=g_{36}\otimes\chi_{12},
\
G_{144}=G_{36}\otimes\chi_{12}.
\]
The result follows from using, for example, the relationship
\[
\(G_{32} \otimes \chi_{8}\) \sl U(p^{2m+1})= \chi_{8}(p^{2m+1})(g \sl U(p^{2m+1}) \otimes \chi_{8}).
\]
   \section{Acknowledgements}
 
 The author would like to thank Scott Ahlgren for suggesting this project and for advice and guidance for this work. The author would also like to thank  the Graduate College Fellowship program at the University of Illinois at Urbana-Champaign and the Alfred P. Sloan Foundation for their generous research support.  
  \bibliographystyle{amsalpha}

 \bibliography{MoreExamples} 
 
 \end{document}